\documentclass[11pt]{article}
\usepackage{amssymb}
\usepackage{amsthm}
\usepackage{amsmath}
\usepackage{amscd}
\newtheorem{theorem}{Theorem}[section]
\newtheorem{corollary}[theorem]{Corollary}
\newtheorem{lemma}[theorem]{Lemma}
\newtheorem{proposition}[theorem]{Proposition}
\theoremstyle{definition}
\newtheorem{definition}[theorem]{Definition}
\newtheorem{remark}[theorem]{Remark}
\newtheorem{example}[theorem]{Example}

\title{\bf Some Noncommutative Constructions and Their Associated NCCW Complexes }
\author{Vida Milani \\ Ali Asghar Rezaei\\
{\small Faculty of Mathematical Sciences, Shahid Beheshti
University, Tehran, Iran}}
\date{}
\begin{document}
\maketitle

\begin{abstract}
 In this article some  noncommutative topological objects
such as NC mapping cone and NC mapping cylinder are introduced. We
will see that these objects are equipped with the NCCW complex
structure of [P]. As a generalization we introduce the notions of
NC mapping cylindrical and conical telescope. Their relations with
NC mappings cone and cylinder are studied. Some results on their
$K_0$ and $K_1$ groups are obtained and the cyclic six term exact
sequence theorem for their k-groups are proved. Finally we explain
their NCCW coplex structure and the conditions in which these
objects admit NCCW complex structures.
\end{abstract}

\section{Introduction}
In topology, the mapping cylinder and mapping cone for a
continuous map $f:X\rightarrow Y$ between topological spaces are
defined by quotient. These two constructions together the cone and
suspension for a topological space, are important concepts in
classical algebraic topology (especially homotopy theory and CW
complexes).\\
The analog versions of the above constructions in noncommutative
case, are defined for C*-morphisms and C*-algebras. We review this
concepts from [W], and study some results about their related NCCW
complex structure, the notion which was introduced by Pedersen in
[P].\\
Another construction which is studied in algebraic topology, is
the mapping telescope for a $X_1\xrightarrow{f_1}
X_2\xrightarrow{f_2}\dotsb$ of continuous maps between topological
spaces. We define two noncommutative version of this: NC mapping
cylinderical and conical telescope.\\
This paper contains five sections. In sections 2 and 3 we study
NCCW complexes and simplicial morphisms, and prove some results
which we need in the other sections. In section 4 we discus NC
mapping cone and cylinder. Finally in section 5 we introduce the
NC mapping cylinderical and conical telescope.
\section{NCCW Complexes}
 In this section we
explain the notions of NCCW complexes from [P]. To this  regard we
express some basic definitions from [W].
\begin{definition}
 Let $A$ and $C$ be two C*-algebras. An {\it extension} for $A$ with
respect to $C$ is a C*-algebra $B$ together with two morphisms
$\alpha$ and $\beta$ for which the following sequence is exact
$$0\xrightarrow[{\quad}]{} A\xrightarrow[{\quad} ]{\alpha} B\xrightarrow[{\quad}]{\beta} C\xrightarrow[{\quad}]{} 0.$$
\end{definition}
\begin{definition}
A {\it pullback} for the C*-algebra $C$ via C*-morphisms
$\alpha_1:A_1\rightarrow C$ and $\alpha_2:A_2\rightarrow C$ is the
C*-subalgebra $PB$ of $A_1\oplus A_2$ defined by
$$PB:=\{a_1\oplus a_2\in A_1\oplus A_2 |\alpha_1(a_1)=\alpha_2(a_2)\}$$
\end{definition}
From now on the pullback decomposition notation $PB:=A_1\bigoplus\limits_{C}A_2$ is  used all throughout this paper.\\
\begin{remark} Since $\alpha_1$ and $\alpha_2$ are continuous maps.
$PB$ is closed in $A_1\oplus A_2$ and so it is a C*-algebra.
\end{remark}
\begin{remark}
 From the above definition it follows that the pullback
satisfies the following universality property; i.e. \\
i)  It
commutes the following diagram
$$\begin {CD}PB@>{\pi_2}>>
A_2\\@V{\pi_1}VV @VV{\alpha_2}V\\ A_1@>{\alpha_1}>>C \end{CD}$$
 ($\pi_1$
and $\pi_2$ are projections onto the first and second
coordinates)\\
ii) For any C*-algebra $D$ and any two C*-morphisms
$\delta_1:D\rightarrow A_1$ and $\delta_2:D\rightarrow A_2$ for
which the following diagram commutes, $$\begin{CD} D@>{\delta_2}>>
A_2\\@V{\delta_1}VV @VV{\alpha_2}V\\ A_1@>{\alpha_1}>>C\end{CD}$$
there exists a unique C*-morphism $\Delta :D\rightarrow PB$ such
that the following diagram commutes.

\unitlength 1.00mm % = 2.85pt
\linethickness{0.4pt}
\ifx\plotpoint\undefined\newsavebox{\plotpoint}\fi % GNUPLOT compatibility
\begin{picture}(105.25,60.75)(20,90)
%\vector(72.75,118.5)(100,118.5)
\put(97,118.5){\vector(1,0){.07}}\put(72.25,118.5){\line(1,0){24.75}}%balaei
\put(97,92){\vector(1,0){.07}} \put(72.25,92){\line(1,0){24.75}} %paini

\put(67.5,96){\vector(0,-1){.07}} \put(67.5,115){\line(0,-1){19}}%chapi

%\put(67.5,95){\vector(0,1){.07}} \put(67.5,95){\line(0,1){0}}

\put(101.5,96){\vector(0,-1){.07}}\put(101.5,115){\line(0,-1){19}}%rasti

\put(66.25,124.5){\vector(1,-2){.07}}
\multiput(57.5,141)(.033653846,-.063461538){260}{\line(0,-1){.063461538}}
%\end
\put(66,92){\makebox(0,0)[cc]{$A_1$}}
\put(102,92){\makebox(0,0)[cc]{$C$}}
\put(66,119){\makebox(0,0)[cc]{$PB$}}
\put(102,119){\makebox(0,0)[cc]{$A_2$}}
\put(83.5,94){\makebox(0,0)[cc]{$\alpha_1$}}
\put(83,120){\makebox(0,0)[cc]{$\pi_2$}}
\put(105.25,107){\makebox(0,0)[cc]{$\alpha_2$}}
\put(71,107){\makebox(0,0)[cc]{$\pi_1$}}
\put(64.75,134){\makebox(0,0)[cc]{$\Delta$}}
\put(83,134){\makebox(0,0)[cc]{$\delta_2$}}
\put(55,118){\makebox(0,0)[cc]{$\delta_1$}}
%\vector(55.75,141.25)(64.25,97.25)
\put(64.25,97.25){\vector(1,-4){.07}}
\multiput(55.75,141.25)(.033730159,-.174603175){252}{\line(0,-1){.174603175}}
%\end
%\vector(58.75,142.75)(99.75,123.25)
\put(99.75,123.25){\vector(2,-1){.07}}
\multiput(58.75,142.75)(.070934256,-.033737024){578}{\line(1,0){.070934256}}
%\end
\put(55.25,145.75){\makebox(0,0)[cc]{$D$}}
\end{picture}

\end{remark}
 In what follows we refer to [P].\\
{\bf Notations.} Set $\mathbb{I}=[0,1]$, $\mathbb{I}^n=[0,1]^n$,
$\mathbb{I}_0^n=(0,1)^n$ and for any C*-algebra $A$,
\begin{alignat*}{2}
\mathbb{I}A=C(\mathbb{I}\rightarrow A)\quad ,\quad &\mathbb{I}^nA&=C(\mathbb{I}^n\rightarrow A)\\
\mathbb{I}_0^nA=C_0(\mathbb{I}_0^n\rightarrow A)\quad,\quad
&S^nA&=C(S^n\rightarrow A)\end{alignat*} Here
$\mathbb{I}^{n+1}/\mathbb{I}_0^{n+1}$ is identified whit the
sphere $S^n$.

 All the above sets together with the usual pointwise
operations, and supremum norm  are C*-algebras.
\begin{definition}
The {\it NCCW complexes} are defined by induction on their
dimension as follows.\\ A NCCW complex of dimension zero is
defined to be a finite linear dimensional C*-algebra
 $A_0$ corresponding to the decomposition $A_0=\bigoplus\limits_{k}M_{n(k)}$ of finite dimensional matrix algebras.\\
In dimension $n$, a NCCW complex is defined as a sequence of
C*-algebras $\{A_0,A_1,\dots A_n\}$, where each $A_k$ obtained
inductively from the previous one by the following pullback
construction. $$\begin{CD} 0@>>>\mathbb{I}_0^kF_k@>>> A_k
@>{\pi}>> A_{k-1} @>>> 0\\
&&@|@VV{\rho_k}V@VV{\sigma_k}V\\
0@>>>\mathbb{I}_0^kF_k@>>>\mathbb{I}_0^kF_k @>{\partial}>>
S^{k-1}F_k @>>> 0\end{CD}$$ In the above diagram, the rows are
extensions,$F_k$ is some C*-algebra of finite dimension, $\delta$
-the boundary map- is the restriction morphism, $\sigma_k$ the
connecting morphism can be any morphism, and finally, $\rho_k$ and
$\pi$ are projections onto the first and second factors in the
pullback decomposition
$$A_k=\mathbb{I}_0^kF_k\bigoplus\limits_{S^{k-1}F_k}A_{k-1}.$$
\end{definition}
\begin{remark} From the above recursive definition it
follows that for each $n$-dimensional NCCW complex $A_n$, there
corresponds a decreasing family of closed ideals, called canonical
ideals,
$$A_n=I_0\supset I_1\supset\dots\supset I_{n-1}\supset I_n\not=0$$
where $I_n=\mathbb{I}_0^nF_n$ and for each $k\geq 1$,
$I_k/I_{k+1}=\mathbb{I}_0^kF_k$.  Moreover for each $0\leq k\leq
n-1$, $A_n/I_{k+1}$ is a $k$-dimensional NCCW complex.\end{remark}
\begin{example}\label{3} $C(\mathbb{I})$ is a
$1$-dimensional NCCW complexes. For see this, let $F_1=\Bbb C$,
$A_0=\mathbb{C }\oplus\mathbb{C}$ and $A_1=C(\mathbb{I})$, then we
will have $\mathbb{I}_0^1F_1=C_0((0,1))$,
$\mathbb{I}^1F_1=C(\mathbb{I})$, and
$S^0F_1=\mathbb{C}\oplus\mathbb{C}$. And the pullback construction
diagram becomes
$$\begin{CD} 0@>>>C_0((0,1))@>>> C(\mathbb{I})
@>{\pi}>> \mathbb{C }\oplus\mathbb{C}@>>> 0\\
&&@|@VV{\rho_1}V@VVV\\
0@>>>C_0((0,1)@>>>C(\mathbb{I})@>{\partial}>> \mathbb{C
}\oplus\mathbb{C}@>>> 0\end{CD}$$ where $C(\mathbb{I})$ is
identified whit
\begin{align*}
C(\mathbb{I})&\simeq\{f\oplus (\lambda\oplus\mu )\in C(\mathbb{I})\oplus (\mathbb{C}\oplus\mathbb{C})\quad |\quad f(0)=\lambda , f(1)=\mu\}\\
&=C(\mathbb{I})\bigoplus\limits_{\mathbb{C }\oplus\mathbb{C
}}(\mathbb{C}\oplus\mathbb{C})
\end{align*}
And for each $f\in C(I)$ and $\lambda ,\mu\in\mathbb{C}$,
\begin{align*}
\pi (f\oplus(\lambda\oplus\mu ))&=\lambda\oplus\mu\\
\rho_1(f\oplus(\lambda\oplus\mu ))&=f\\
\partial f&=f(0)\oplus f(1)\end{align*} Now the sequence $\{A_0=\mathbb{C }\oplus\mathbb{C }
, A_1=C(I)\}$ makes $C(\mathbb{I})$ into a $1$-dimensional NCCW
complex. The canonical ideals for $C(\mathbb{I})$ are
$$C(\mathbb{I})=I_0\supset I_1=\mathbb{I}_0^1F_1=C_0((0,1))$$
In a similar way we can see that both $C_0((0,1])$ and
$C_0((0,1))$ are  $1$-dimensional NCCW complexes. In \ref{1} we
describe it in another way. \end{example}
\section{Simplicial morphisms}

Simplicial morphisms are the most important morphisms  in the
category of NCCW complexes. They are in fact the NC analogue of
simplicial map on CW complex.
\begin{definition}
A {\it simplicial morphism} from the $n$-dimensional NCCW complex
$A_n$ into the $m$-dimensional NCCW complex $B_m$ is a mapping
$\alpha :A_n\rightarrow B_m$ satisfying the following two
conditions:\\
i) If  \begin{align*}A_n&=I_0\supset I_1\supset\dots\supset I_{n-1}\supset I_n\not=0\\
B_m&=J_0\supset J_1\supset\dots\supset J_{m-1}\supset J_n\not=0
\end{align*}
be the sequences of canonical ideals for $A_n$ and $B_m$, then
$\alpha(I_k)\subset J_k$ for all $k$. Particularly $\alpha(I_k)=0$
for $k>m$.\\
ii) for $0\leq k\leq n$, if $I_k/I_{k-1}=\mathbb{I}_0^kF_k$,
$J_k/J_{k-1}=\mathbb{I}_0^kG_k$ and $\tilde{\alpha_k} :
\mathbb{I}_0^kF_k\longrightarrow\mathbb{I}_0^kG_k$ be the
homomorphism induced by $\alpha$, then there exists a morphism
$\varphi_k:F_k\longrightarrow G_k$ and a homeomorphism $i_k$ of
$\mathbb{I}^k$ such that $\tilde{\alpha_k}=i_k^*\otimes\varphi_k$,
where $i_k^*:C_0(\mathbb{I}^k)\longrightarrow C_0(\mathbb{I}^k)$
is induced by $i_k$. Here $\mathbb{I}_0^kF_k$ is identified with
$C_0(\mathbb{I}^k)\otimes F_k$ and the same for
$\mathbb{I}_0^kG_k$.
\end{definition}
Here we state some properties of simplicial morphisms from [P].(In the following propositions $A_n$ and $B_m$ are NCCW complex of dimensions $n$,$m$):\\
{\bf Proposition (i).}The kernel and the image of a simplicial
morphism are NCCW complexes.\\
{\bf Proposition (ii).}The pullback of an NCCW complex $C$ via
simplicial morphisms $\alpha:A_n\rightarrow C$ and $\beta:B_m\rightarrow C$ is an NCCW complex of dimension $max\{n,m\}$.\\
{\bf Proposition (iii).}The tensor product $A_n\otimes B_m$ of
NCCW complexes is again an NCCW complex of dimension $n+m$.
\begin{example}\label{1}
We show that $C_0((0,1])$, $C_0([0,1))$, and $C_0((0,1))$ are
$1$-dimensional NCCW complexes. By example \ref{3} $C(\mathbb{I})$
is an NCCW complex of dimension $1$. Now $C_0((0,1])$ is the
kernel of the map
\begin{align*}
\alpha :&C(\mathbb{I})\longrightarrow \mathbb{C}\\
&\quad f\longmapsto f(0)\end{align*} which is a siplicial
morphism. (We note that since $\mathbb{C}$ is a zero dimensional
NCCW complex, with the only nonzero ideal $\mathbb{C}$, so
$\alpha$ satisfies the two conditions of being a simplicial
morphism.)So $C_0((0,1])$ is an NCCW complex. Its dimension is
one, because it is not of finite linear dimension(and so it is not
a $0$-dimensional NCCW complex ). Also $C_0([0,1))$ being
identical to $C_0((0,1])$ is an NCCW complex of dimension one. In
a similar way, $C_0((0,1))$ as the kernel of the simplicial
morphism
\begin{align*}
\beta :&C_0((0,1])\longrightarrow \mathbb{C}\\
&\qquad f\longmapsto f(1)\end{align*} is an $1$-dimensional NCCW
complex.
\end{example}
\begin{lemma}
For each NCCW complex $A$ of dimension n, $\mathbb{I}A$ is an NCCW
complex of dimension $n+1$.
\end{lemma}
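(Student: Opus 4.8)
The plan is to avoid constructing the NCCW structure on $\mathbb{I}A$ by hand, and instead to reduce the statement to two tools already established: the fact (Example \ref{3}) that $C(\mathbb{I})$ is a $1$-dimensional NCCW complex, and Proposition (iii), which asserts that the tensor product of NCCW complexes of dimensions $p$ and $q$ is an NCCW complex of dimension $p+q$. The key observation is the canonical identification
$$\mathbb{I}A = C(\mathbb{I}\to A)\cong C(\mathbb{I})\otimes A,$$
under which $\mathbb{I}A$ is exhibited as the tensor product of a $1$-dimensional NCCW complex and the $n$-dimensional NCCW complex $A$; Proposition (iii) then delivers dimension $1+n=n+1$ at once.

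First I would establish the isomorphism. The natural assignment sends an elementary tensor $f\otimes a$, with $f\in C(\mathbb{I})$ and $a\in A$, to the $A$-valued function $t\mapsto f(t)a$, and extends linearly to a $*$-homomorphism from the algebraic tensor product into $C(\mathbb{I}\to A)$. Its image is dense in $C(\mathbb{I}\to A)$ for the supremum norm by a standard partition-of-unity approximation argument on the compact space $\mathbb{I}$. Since $C(\mathbb{I})$ is commutative, hence nuclear, the minimal and maximal C*-tensor norms coincide, so the completed tensor product $C(\mathbb{I})\otimes A$ is unambiguous and the map above extends to an isometric $*$-isomorphism onto $\mathbb{I}A$.

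With this identification in hand, the conclusion is purely formal. By Example \ref{3}, $C(\mathbb{I})$ is a $1$-dimensional NCCW complex; $A$ is an NCCW complex of dimension $n$ by hypothesis; and Proposition (iii) applied to $C(\mathbb{I})\otimes A$ yields an NCCW complex of dimension $1+n=n+1$. Transporting this structure back along the isomorphism equips $\mathbb{I}A$ with an NCCW complex structure of dimension $n+1$, as required.

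I expect the only genuine obstacle to be the tensor-product identification, and specifically the matching of the tensor-product convention implicit in Proposition (iii) with the completed C*-tensor product used above; once the nuclearity of $C(\mathbb{I})$ is invoked this step is routine, but it deserves to be stated explicitly, since otherwise the lemma is an immediate corollary of earlier results. As a fallback, one could argue directly by induction on $n$, building the defining pullback square for $\mathbb{I}A$ by applying $\mathbb{I}(-)$ to the extension defining $A_k$ and tracking the effect on the factors $\mathbb{I}_0^kF_k$ and $S^{k-1}F_k$; this amounts to re-deriving the relevant case of Proposition (iii) by hand and is considerably more laborious, so the clean tensor-product route is preferable.
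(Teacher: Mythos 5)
Your proposal is correct and follows exactly the paper's argument: the paper likewise identifies $\mathbb{I}A$ with $C(\mathbb{I})\otimes A$ and invokes Proposition (iii) together with the fact that $C(\mathbb{I})$ is a $1$-dimensional NCCW complex. Your additional care about the tensor-product identification (density and nuclearity of $C(\mathbb{I})$) is a welcome elaboration of a step the paper treats as obvious, but the route is the same.
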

\begin{proof} It is obvious from the proposition (iii) and the fact that
$\mathbb{I}A$ can be identified with the tensor product
$C(\mathbb{I})\otimes A$.\end{proof}
\begin{proposition}
For any NCCW complex $A$, the evaluation map $ev(1)
:\mathbb{I}A\rightarrow A$ defined by $f\mapsto f(1)$ is a
simplicial morphism.
\end{proposition}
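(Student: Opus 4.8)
The plan is to verify directly the two defining conditions of a simplicial morphism for the surjection $ev(1)$. Since the preceding Lemma identifies $\mathbb{I}A$ with the tensor product $C(\mathbb{I})\otimes A$, I would first record the canonical ideal filtrations on both sides. For $A$ of dimension $n$ write $A=I_0\supset I_1\supset\dots\supset I_n\neq 0$ with $I_k/I_{k+1}=\mathbb{I}_0^kF_k$ (and $I_{n+1}=0$), while $C(\mathbb{I})$ carries the filtration $C(\mathbb{I})\supset C_0((0,1))\supset 0$ from Example \ref{3}. Using the tensor-product construction behind Proposition (iii), the canonical ideals of $\mathbb{I}A$ are then
$$\tilde I_k=\bigl(I_k\otimes C(\mathbb{I})\bigr)+\bigl(I_{k-1}\otimes C_0((0,1))\bigr),\qquad 0\le k\le n+1,$$
a decreasing chain whose $k$-th subquotient identifies with
$$\tilde I_k/\tilde I_{k+1}\cong \mathbb{I}_0^kF_k\otimes(\mathbb{C}\oplus\mathbb{C})\ \oplus\ \mathbb{I}_0^{k-1}F_{k-1}\otimes C_0((0,1))\cong \mathbb{I}_0^k\tilde F_k,$$
where $\tilde F_k=F_k\oplus F_k\oplus F_{k-1}$; the two copies of $F_k$ record the values at the endpoints $0$ and $1$ of the new interval and $F_{k-1}$ comes from its interior, via the identification $\mathbb{I}_0^{k-1}\otimes\mathbb{I}_0^1\cong\mathbb{I}_0^k$.

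Next I would check condition (i). Writing $ev(1)=\mathrm{ev}_1\otimes\mathrm{id}_A$ with $\mathrm{ev}_1:C(\mathbb{I})\to\mathbb{C}$ evaluation at $1$, the map sends $C_0((0,1))$ to $0$ and $C(\mathbb{I})$ onto $\mathbb{C}$. Hence the second summand of $\tilde I_k$ is annihilated while the first maps onto $I_k\otimes\mathbb{C}=I_k$, giving $ev(1)(\tilde I_k)=I_k\subseteq I_k$ for all $k$; and for $k>n$ one has $\tilde I_k=I_{k-1}\otimes C_0((0,1))$, which is killed, so $ev(1)(\tilde I_k)=0$, as required.

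For condition (ii) I would compute the induced homomorphism $\tilde\alpha_k:\mathbb{I}_0^k\tilde F_k\to\mathbb{I}_0^kF_k$ on the $k$-th subquotients. On the summand $\mathbb{I}_0^kF_k\otimes(\mathbb{C}\oplus\mathbb{C})$, evaluation at $1$ selects the value-at-$1$ coordinate and acts as the identity on the underlying factor $\mathbb{I}_0^kF_k$, which is precisely the $k$-th subquotient of $A$; on the interior summand $\mathbb{I}_0^{k-1}F_{k-1}\otimes C_0((0,1))$ it vanishes. Thus, identifying $\mathbb{I}_0^kF_k$ with $C_0(\mathbb{I}^k)\otimes F_k$, one gets $\tilde\alpha_k=i_k^*\otimes\varphi_k$ with $i_k=\mathrm{id}_{\mathbb{I}^k}$ (a homeomorphism, so $i_k^*=\mathrm{id}$) and $\varphi_k:\tilde F_k=F_k\oplus F_k\oplus F_{k-1}\to F_k$ the projection onto the middle (value-at-$1$) factor, which is a morphism. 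This is exactly the factorization demanded by condition (ii), and the two conditions together establish that $ev(1)$ is simplicial.

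I expect the only genuine obstacle to be the first step: pinning down the canonical filtration of the tensor product $\mathbb{I}A$ and verifying that $ev(1)$ respects the cell-by-cell identifications $\mathbb{I}_0^k\tilde F_k$, including the reindexing $\mathbb{I}_0^{k-1}\otimes\mathbb{I}_0^1\cong\mathbb{I}_0^k$. Once that filtration is in hand, conditions (i) and (ii) reduce to the elementary facts that $\mathrm{ev}_1$ annihilates the vanishing ideal $C_0((0,1))$ and is the identity on the endpoint quotient, so that no nontrivial reparametrization $i_k$ is ever required.
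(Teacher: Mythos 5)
Your proof is correct, but it takes a genuinely different route from the paper. The paper's proof is essentially a one-line citation: it regards $A$ (as constant functions) as an NCCW subcomplex of $\mathbb{I}A$, notes that $ev(1)$ is the associated quotient map, and invokes [P, 11.14] to conclude that this quotient map is simplicial. You instead verify the two conditions of the definition by hand, starting from the canonical filtration that Pedersen's tensor-product construction puts on $\mathbb{I}A\cong C(\mathbb{I})\otimes A$, namely $\tilde I_k=(I_k\otimes C(\mathbb{I}))+(I_{k-1}\otimes C_0((0,1)))$ --- the specialization of the general formula $\tilde I_k=\sum_{i+j=k}I_i\otimes J_j$ to the two-step filtration of $C(\mathbb{I})$ from Example \ref{3} --- and then computing $ev(1)(\tilde I_k)=I_k$ and $\tilde\alpha_k=i_k^*\otimes\varphi_k$ with $i_k$ the identity and $\varphi_k$ the projection of $\tilde F_k=F_k\oplus F_k\oplus F_{k-1}$ onto the value-at-$1$ copy of $F_k$. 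These computations are right, including the key observation that the interior summand $\mathbb{I}_0^{k-1}F_{k-1}\otimes C_0((0,1))$ is annihilated because $\mathrm{ev}_1$ kills $C_0((0,1))$, so no nontrivial reparametrization is ever needed. What your approach buys is transparency and independence from [P, 11.14]; the price is that you must take the explicit description of the canonical ideals of a tensor product as given (you correctly flag this as the one ingredient imported from Pedersen's construction rather than merely from the statement of Proposition (iii)), and there is a harmless indexing slip at $k=0$, where the term $I_{-1}\otimes C_0((0,1))$ should be read as absorbed into $I_0\otimes C(\mathbb{I})$. The paper's argument is shorter but conceals all of this in the citation; yours exhibits the cell-by-cell structure explicitly.
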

\begin{proof}
Since both $\mathbb{I}A$ and $A$ are NCCW complexes, and $A$ is
embedded in $\mathbb{I}A$, so we can regard $A$ as an NCCW
subcomplex of $\mathbb{I}A$. Now by [P, 11.14] the quotient map
$\mathbb{I}A\rightarrow A$ is a simplicial morphism. So $ev(1)$
(and also $ev(t_0)$ for each $t_0\in\mathbb{I}$) is simplicial,
since it is a quotient map.
\end{proof}
\section{NC constructions}
Following [W] in this section the notions of cone and suspension
for an arbitrary C*-algebra and also the NC mapping cone and NC
mapping cylinder for C*-morphisms are reviewed. We also study
their associated NCCW complexes.
\begin{definition} For
a C*-algebra $A$, the {\it NC cone} over $A$, $CA$, and the {\it
NC suspension} of $A$, $SA$ are defined respectively by
\begin{align*}
CA&:=\{f\in\mathbb{I}A|f(0)=0\}\\
SA&:=\{f\in\mathbb{I}A|f(0)=f(1)=0\}\end{align*}\end{definition}
\begin{remark}From the above definition we have the inclusions
$SA\subset CA\subset\mathbb{I}A$. The following facts are stated
from
[W]:\\ i) $CA$ is contractible.\\
ii) $SA$ is contractible only if $A$ is contractible
\end{remark}
\begin{proposition}
For every NCCW complex $A$ of dimension $n$, the cone $CA$ and the
suspension $SA$ are NCCW complexes of dimension $n+1$.
\end{proposition}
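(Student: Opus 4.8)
The plan is to realize the cone and the suspension as tensor products of $A$ with a fixed one-dimensional NCCW complex, and then to let Proposition (iii) do the bookkeeping on dimensions. Concretely, I would first establish the two $C^*$-isomorphisms
$$CA\cong C_0((0,1])\otimes A,\qquad SA\cong C_0((0,1))\otimes A.$$
Both rest on the identification $\mathbb{I}A\cong C(\mathbb{I})\otimes A$ already used in the proof of the Lemma above. Under that identification an element of $\mathbb{I}A$ is a continuous $A$-valued function on $\mathbb{I}$, so the defining condition $f(0)=0$ of $CA$ singles out exactly the functions in $C_0((0,1]\to A)$, while the condition $f(0)=f(1)=0$ of $SA$ singles out $C_0((0,1)\to A)$. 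Since $C_0(X)$ is commutative and hence nuclear, the $C^*$-tensor product is unambiguous and one has the canonical identifications $C_0(X)\otimes A\cong C_0(X\to A)$, which yields the two displayed isomorphisms.

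With these in hand the dimension count is immediate. By Example \ref{1} both $C_0((0,1])$ and $C_0((0,1))$ are NCCW complexes of dimension $1$, and by hypothesis $A$ is an NCCW complex of dimension $n$. Applying Proposition (iii) to the tensor products $C_0((0,1])\otimes A$ and $C_0((0,1))\otimes A$ shows that each is an NCCW complex of dimension $1+n=n+1$. Transporting the structure along the isomorphisms above, $CA$ and $SA$ are NCCW complexes of dimension $n+1$, as claimed.

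An alternative route avoids the tensor identifications: since $ev(0):\mathbb{I}A\to A$ is a simplicial morphism (the Proposition above covers $ev(t_0)$ for every $t_0\in\mathbb{I}$), its kernel $CA$ is an NCCW complex by Proposition (i); similarly $SA$ arises as the kernel of the simplicial morphism $\mathbb{I}A\to A\oplus A$, $f\mapsto f(0)\oplus f(1)$, or iteratively as $\ker\big(ev(1)|_{CA}\big)$. This settles that $CA$ and $SA$ are NCCW complexes, but Proposition (i) does not directly deliver the dimension, so I would still fall back on the tensor-product description to pin it down at $n+1$.

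The only genuine obstacle is the first paragraph: one must verify carefully that $C_0(X)\otimes A\cong C_0(X\to A)$ as $C^*$-algebras and that this isomorphism matches the defining relations of $CA$ and $SA$. This is where the nuclearity of $C_0(X)$ enters, and one should also check that the tensor product appearing here is the same one used in Proposition (iii); both points are standard but deserve an explicit remark.
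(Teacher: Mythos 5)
Your proof is correct and follows essentially the same route as the paper: identify $CA\cong C_0((0,1])\otimes A$ and $SA\cong C_0((0,1))\otimes A$, then combine Example \ref{1} with Proposition (iii) to get dimension $n+1$. The extra remarks on nuclearity and the alternative kernel argument are fine but not needed beyond what the paper's own one-line proof already uses.
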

\begin{proof}
This follows from example \ref{1}, proposition(iii), and the
following identifications:
\begin{align*}
CA&=C_0((0,1]\rightarrow A)\simeq C_0(0,1])\otimes A\\
SA&=C_0((0,1)\rightarrow A)\simeq C_0((0,1))\otimes A.\end{align*}
\end{proof}
\begin{definition}
For a C*-morphism $\alpha :A\rightarrow B$ the {\it NC mapping
cone} is defined by $$Cone(\alpha):=\{a\oplus f\in A\oplus CB |
f(1)=\alpha (a)\}$$
\end{definition}
\begin{remark}
$Cone(\alpha )$ satisfies the following pullback diagram
$$\begin {CD}Cone(\alpha)@>{\pi_2}>>
CB\\@V{\pi_1}VV @VV{ev(1)}V\\ A@>{\alpha}>>B \end{CD}$$ where
$\pi_1$ and $\pi_2$ are projections onto the first and second
coordinates and $ev(1)$ is the evaluation map. \end{remark}
\begin{proposition}\label{2}
If $\alpha :A_n\rightarrow B_m$ is a simplicial morphism between
NCCW complexes of dimension $n$ and $m$, then $Cone(\alpha)$ is an
NCCW complex of dimension $max\{n,m+1\}$.
\end{proposition}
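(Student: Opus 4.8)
The plan is to recognize $Cone(\alpha)$ as a pullback and then reduce the statement to \textbf{Proposition (ii)}. By the remark immediately preceding this proposition, $Cone(\alpha)$ fits into the pullback square with corner maps $\alpha:A_n\rightarrow B_m$ and $ev(1):CB\rightarrow B_m$, so that
$$Cone(\alpha)=A_n\bigoplus\limits_{B_m}CB.$$
Since $B_m$ is an NCCW complex of dimension $m$, the earlier proposition on cones and suspensions shows that $CB$ is an NCCW complex of dimension $m+1$. Hence, \emph{once both corner maps are known to be simplicial morphisms}, Proposition (ii) applies to the base $B_m$ and yields that $Cone(\alpha)$ is an NCCW complex of dimension $\max\{\dim A_n,\dim CB\}=\max\{n,m+1\}$, which is precisely the asserted dimension. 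The hypothesis already supplies that $\alpha$ is simplicial, so everything hinges on the evaluation map.

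The crux is therefore to prove that $ev(1):CB\rightarrow B_m$ is a simplicial morphism. First I would identify its kernel: for $f\in CB$ the condition $f(1)=0$ together with $f(0)=0$ describes exactly $\{f\in\mathbb{I}B\mid f(0)=f(1)=0\}=SB$, the suspension, and since $ev(1)$ is surjective (e.g. $b\mapsto(t\mapsto tb)$ is a section) it induces an isomorphism $CB/SB\cong B_m$. By the suspension part of the cone proposition, $SB$ is an NCCW complex of dimension $m+1$. I would then check that $SB$ sits inside $CB$ as an NCCW subcomplex, using the tensor identification $CB\simeq C_0((0,1])\otimes B$ and the compatibility of its canonical ideals with those of $SB\simeq C_0((0,1))\otimes B$. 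Mirroring the proof that $ev(1):\mathbb{I}A\rightarrow A$ is simplicial, the quotient map $CB\rightarrow CB/SB\cong B_m$ is then simplicial by [P, 11.14], and this quotient map is exactly $ev(1)$.

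The step I expect to be the main obstacle is precisely this verification that $SB$ is a subcomplex of $CB$ and that the induced quotient map realizes $ev(1)$ at the level of the canonical filtration, i.e. checking condition (ii) of the definition of simplicial morphism — that the maps induced on the subquotients $I_k/I_{k+1}$ factor as $i_k^{*}\otimes\varphi_k$ for a homeomorphism $i_k$ of $\mathbb{I}^k$ and a morphism $\varphi_k$. Unlike the case of $\mathbb{I}A$, here the base $B_m$ is \emph{not} embedded in $CB$ as constant functions (those fail the condition $f(0)=0$), so the subcomplex structure cannot be read off from a constant-section splitting and must instead be extracted from the cone's intrinsic NCCW filtration coming from the tensor product $C_0((0,1])\otimes B$. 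Carrying out this bookkeeping through the tensor-product filtration is where the genuine work lies; once it is settled, the dimension count and the conclusion follow formally from Proposition (ii).
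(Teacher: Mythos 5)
Your argument is essentially the paper's own proof: realize $Cone(\alpha)$ as the pullback $A_n\bigoplus_{B_m}CB$, note that $CB$ is an NCCW complex of dimension $m+1$, and apply Proposition (ii). The only difference is one of care: the paper flatly asserts that $ev(1):CB\rightarrow B_m$ is simplicial (its Proposition 3.4 covers only $ev(1):\mathbb{I}A\rightarrow A$), whereas you correctly single out this restriction as the real content and sketch a justification via $\ker(ev(1)|_{CB})=SB$ and [P, 11.14] --- a point worth making explicit, even though your filtration bookkeeping is left as a plan rather than carried out, which is no worse than what the paper itself does.
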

\begin{proof}
Since $CB_m$ is an NCCW complex of dimension $m+1$, and $ev(1)$
and $\alpha$ are simplicial morphisms in the pullback diagram for
$Cone(\alpha)$, from proposition (ii) it follows that
$Cone(\alpha)$ is an NCCW complex of dimension $max\{n,m+1\}$.
\end{proof}
\begin{definition}
For a C*-morphism $\alpha :A\rightarrow B$, the {\it NC mapping
cylinder} is defined by $$Cyl(\alpha):=\{a\oplus f\in A\oplus
\mathbb{I}B | f(1)=\alpha (a)\}.$$
\end{definition}
As in $Cone(\alpha)$ for $Cyl(\alpha)$ we have the following
pullback diagram:
$$\begin {CD}Cyl(\alpha)@>{\pi_2}>>
\mathbb{I}B\\@V{\pi_1}VV @VV{ev(1)}V\\ A@>{\alpha}>>B \end{CD}$$
\begin{remark}
Since $CB\subset\mathbb{I}B$, so for any C*-morphism $\alpha$ we
have the inclusion $Cone(\alpha)\subset Cyl(\alpha)$. Also for the
zero morphism $0:A\rightarrow B$ we have:
\begin{align*}
Cone(0)&=\{a\oplus f\in A\oplus CB | f(1)=0\}=A\oplus C_0((0,1)\rightarrow b)=A\oplus SB\\
Cyl(0)&=\{a\oplus f\in A\oplus \mathbb{I}B | f(1)=0\}=A\oplus
C_0([0,1)\rightarrow B)\simeq A\oplus CB.\end{align*}
\end{remark}
As in the case $Cone(\alpha)$ we have the following proposition
for $Cyl(\alpha)$:
\begin{proposition}\label{6} For a simplicial morphism $\alpha: A_n\rightarrow B_m$ between NCCW
complexes of dimension $n$ and $m$, the $Cyl(\alpha)$ is an NCCW
complex of dimension $max\{n,m+1\}$
\end{proposition}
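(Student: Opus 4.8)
The plan is to imitate, essentially verbatim, the argument already used for $Cone(\alpha)$ in Proposition \ref{2}, with the NC cone $CB_m$ replaced throughout by the full cylinder algebra $\mathbb{I}B_m$. The crucial structural observation is that the pullback diagram displayed just above this proposition exhibits $Cyl(\alpha)$ as the pullback of $B_m$ along the two morphisms $\alpha\colon A_n\rightarrow B_m$ and $ev(1)\colon\mathbb{I}B_m\rightarrow B_m$; that is,
$$Cyl(\alpha)\;\simeq\;A_n\bigoplus\limits_{B_m}\mathbb{I}B_m.$$
Once this identification is in place, the conclusion will drop out of Proposition (ii) on pullbacks of NCCW complexes along simplicial morphisms.

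Concretely, I would proceed in three steps. First, invoke the lemma asserting that $\mathbb{I}A$ is an NCCW complex of dimension one higher than $A$ to conclude that $\mathbb{I}B_m$ is an NCCW complex of dimension $m+1$. Second, recall the proposition stating that the evaluation $ev(1)\colon\mathbb{I}B_m\rightarrow B_m$ is a simplicial morphism; I would note that, unlike in the cone case, this proposition applies here directly and without modification, since its hypothesis concerns precisely the full cylinder $\mathbb{I}B_m$ and not a subalgebra of it. Third, the morphism $\alpha$ is simplicial by hypothesis. Thus both legs of the pullback square are simplicial morphisms with common target the $m$-dimensional NCCW complex $B_m$, and Proposition (ii) yields that $Cyl(\alpha)$ is an NCCW complex of dimension $\max\{\dim A_n,\dim\mathbb{I}B_m\}=\max\{n,m+1\}$, as claimed.

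I do not expect a genuine obstacle here: the content is carried entirely by the cited results, and the dimension count $\max\{n,m+1\}$ is exactly what Proposition (ii) returns. The only point requiring care is the first, set-theoretic, step, namely verifying that the defining relation $f(1)=\alpha(a)$ for elements $a\oplus f\in A_n\oplus\mathbb{I}B_m$ genuinely presents $Cyl(\alpha)$ as the pullback $A_n\bigoplus_{B_m}\mathbb{I}B_m$ in the precise sense of the pullback definition, i.e. that the condition cutting out the pullback is $ev(1)(f)=\alpha(\pi_1(a\oplus f))$. This is immediate from the formula $ev(1)(f)=f(1)$, but since it is what licenses the appeal to Proposition (ii), I would state it explicitly rather than leave it implicit.
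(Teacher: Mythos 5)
Your proposal is correct and follows exactly the paper's own route: the paper likewise notes that $\mathbb{I}B_m$ is an NCCW complex of dimension $m+1$ and then repeats the argument of Proposition \ref{2}, applying Proposition (ii) to the pullback diagram with the simplicial morphisms $\alpha$ and $ev(1)$. Your extra care in spelling out the pullback identification is a reasonable elaboration but does not change the argument.
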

\begin{proof}
We know that $\mathbb{I}B_m$ is an NCCW complex of dimension
$m+1$. The rest of proof is similar to \ref{2}.
\end{proof}
In the next section we will apply the following fact.
\begin{proposition}\label{4}
If $\alpha:A\rightarrow B$ is a C*-morphism, then $A$ is a
deformation retract of $Cyl(\alpha)$. \end{proposition}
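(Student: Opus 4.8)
The plan is to exhibit explicit C*-morphisms realizing $A$ as a (strong) deformation retract of $Cyl(\alpha)$, in the homotopy sense appropriate to C*-algebras. Recall that two morphisms $\phi_0,\phi_1:C\rightarrow D$ are homotopic when there is a morphism $\Phi:C\rightarrow\mathbb{I}D$ with $ev(0)\circ\Phi=\phi_0$ and $ev(1)\circ\Phi=\phi_1$, and that $A$ is a deformation retract of $C$ when there exist morphisms $\iota:A\rightarrow C$ and $r:C\rightarrow A$ with $r\circ\iota=\mathrm{id}_A$ and $\iota\circ r\simeq\mathrm{id}_C$. So the goal reduces to producing such $\iota$, $r$ and a homotopy.

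First I would take the retraction to be the pullback projection $r:=\pi_1:Cyl(\alpha)\rightarrow A$, $(a\oplus f)\mapsto a$, which is a morphism by the pullback structure, and the inclusion $\iota:A\rightarrow Cyl(\alpha)$ sending $a$ to the pair $a\oplus c_{\alpha(a)}$, where $c_{\alpha(a)}$ denotes the constant function on $\mathbb{I}$ with value $\alpha(a)$. Since $c_{\alpha(a)}(1)=\alpha(a)$, this pair satisfies the defining constraint and lies in $Cyl(\alpha)$; moreover $\iota$ is a morphism and $r\circ\iota=\mathrm{id}_A$ is immediate.

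The heart of the argument is the homotopy $\iota\circ r\simeq\mathrm{id}$, which I would build by shrinking the parameter interval toward the endpoint $t=1$ at which the cylinder constraint is imposed. For $s\in\mathbb{I}$ define $H_s(a\oplus f):=a\oplus f_s$ with $f_s(t):=f\bigl(s+(1-s)t\bigr)$. Each reparametrization $t\mapsto s+(1-s)t$ is a continuous self-map of $\mathbb{I}$ fixing $1$, so $f\mapsto f_s$ is a morphism of $\mathbb{I}B$; furthermore $f_s(1)=f(1)=\alpha(a)$, so $H_s$ takes values in $Cyl(\alpha)$ and is a morphism. One checks that $H_0=\mathrm{id}$ and that $H_1=\iota\circ r$, since at $s=1$ the function $f_s$ is the constant $f(1)=\alpha(a)$. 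Packaging these into $H:Cyl(\alpha)\rightarrow\mathbb{I}\,Cyl(\alpha)$ by $H(a\oplus f)(s):=H_s(a\oplus f)$ yields the desired homotopy, provided $s\mapsto H_s(a\oplus f)$ is norm continuous; this follows from the uniform continuity of $f$ on the compact interval $\mathbb{I}$, which gives $\sup_t\|f_s(t)-f_{s_0}(t)\|\rightarrow 0$ as $s\rightarrow s_0$.

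The point that needs genuine care, and which dictates the whole construction, is that the homotopy must remain inside $Cyl(\alpha)$ at every stage, i.e. the constraint $f_s(1)=\alpha(a)$ must be preserved for all $s$; this is precisely why the reparametrization is chosen to fix the endpoint $t=1$ rather than $t=0$. With this choice the remaining verifications (that each $H_s$ is a morphism, and continuity in $s$) are routine, and one gets the bonus that $H_s\circ\iota=\iota$ for every $s$, since the constant functions are left fixed by the reparametrization. Hence $A$ is in fact a strong deformation retract of $Cyl(\alpha)$.
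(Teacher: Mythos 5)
Your proposal is correct and follows essentially the same route as the paper: the same retraction $\pi_1$, the same inclusion via the constant function $\alpha(a)$, and the same endpoint-fixing reparametrization of $\mathbb{I}$ (your $t\mapsto s+(1-s)t$ is the paper's $(1-s)t+s$ with the variable names swapped). You additionally verify norm continuity in the homotopy parameter and observe that the retraction is strong, which the paper leaves implicit.
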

\begin{proof}
Let $\pi: Cyl(\alpha)\rightarrow A$ be given by $\pi(a\oplus
f):=a$ and $\eta: A\rightarrow Cyl(\alpha)$ by
$\eta(a):=a\oplus\alpha(a)$, where $\alpha(a)$ denotes the
constant map $t\mapsto\alpha(a)$. Then $\pi\circ\eta =id_A$. Now
we define the C*-morphism $\psi :
Cyl(\alpha)\rightarrow\mathbb{I}Cyl(\alpha)$ by
\begin{align*}
\psi(a\oplus f):&\mathbb{I}\longrightarrow Cyl(\alpha)\\
&t\longmapsto a\oplus f_t\end{align*} where
$f_t:\mathbb{I}\rightarrow B$ is defined as $f_t(s)=f((1-s)t+s)$
for each $s\in\mathbb{I}$. Then we can see that
\begin{align*}
\psi(a\oplus f)(0)&=a\oplus f=id_{Cyl(\alpha)}(a\oplus f)\\
\psi(a\oplus f)(1)&=a\oplus
f(1)=a\oplus\alpha(a)=\eta\circ\pi(a\oplus f))
\end{align*}
and so $\eta\circ\pi$ and $id_{Cyl(\alpha)}$ are homotopic.
\end{proof}
\section{ NC mapping telescope}
In this section we generalize the notions of NC mapping cylinder
and NC mapping cone. Their related exact sequences are studied,
and their $K_0$ and $K_1$ groups are obtained and the conditions
for their NCCW complex structure are specified.
\begin{definition}
For a sequence of length $n$ of C*-algebras
\begin{equation}
A_1\xrightarrow[{\qquad}]{\alpha_1}A_2\xrightarrow[{\qquad}]{\alpha_2}\dotsb\xrightarrow{\alpha_{n-1}}A_n\xrightarrow[{\qquad}]{\alpha_n}A_{n+1}
\end{equation}
the {\it NC mapping cylindrical telescope} is defined by
\begin{align*}
T_n:=\{&a\oplus f_2\oplus\dots\oplus f_{n+1}\in
A_1\oplus\mathbb{I}A_2\oplus\dots\oplus\mathbb{I}A_{n+1}
|\\
&f_{k+1}(1)=\alpha_k\circ\alpha_{k-1}\circ\dots\circ\alpha_1(a) ,
k=1,2,\dots,n\}
\end{align*}
\end{definition}
Since each $\alpha_k$ is continuous, $T_n$ is a closed subalgebra
of the C*-algebra
$A_1\oplus\mathbb{I}A_2\oplus\dots\oplus\mathbb{I}A_{n+1}$, and so
it is a C*-algebra. Also we note that for $n=1$,
$T_1=Cyl(\alpha_1)$.
\begin{proposition}\label{7}
For each $n\geq 2$, $T_{n-1}$ is a deformation retract of $T_n$.
\end{proposition}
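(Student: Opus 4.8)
The plan is to mimic the deformation retraction proof of Proposition~\ref{4} (the case $n=1$), where $A$ was shown to be a deformation retract of $Cyl(\alpha_1)$. The key observation is that $T_{n-1}$ sits inside $T_n$ as the subalgebra in which the last cylinder coordinate $f_{n+1}$ is collapsed: an element $a\oplus f_2\oplus\dots\oplus f_n$ of $T_{n-1}$ should correspond to $a\oplus f_2\oplus\dots\oplus f_n\oplus g_{n+1}$ in $T_n$, where $g_{n+1}$ is the constant map at the value $\alpha_n\circ\dots\circ\alpha_1(a)$ that the constraint $f_{n+1}(1)$ is forced to equal. So first I would write down explicitly the embedding $\eta:T_{n-1}\rightarrow T_n$ sending $a\oplus f_2\oplus\dots\oplus f_n$ to $a\oplus f_2\oplus\dots\oplus f_n\oplus c$, where $c:\mathbb{I}\rightarrow A_{n+1}$ is the constant map $t\mapsto \alpha_n\circ\dots\circ\alpha_1(a)$, and the retraction $\pi:T_n\rightarrow T_{n-1}$ that simply forgets the last coordinate, $a\oplus f_2\oplus\dots\oplus f_{n+1}\mapsto a\oplus f_2\oplus\dots\oplus f_n$. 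One checks immediately that $\pi$ lands in $T_{n-1}$ (the remaining constraints are untouched) and that $\pi\circ\eta=id_{T_{n-1}}$.

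Next I would build the homotopy contracting the last coordinate, exactly as in Proposition~\ref{4}. Define a C*-morphism $\psi:T_n\rightarrow\mathbb{I}T_n$ by $\psi(a\oplus f_2\oplus\dots\oplus f_{n+1})(t)=a\oplus f_2\oplus\dots\oplus f_n\oplus (f_{n+1})_t$, where $(f_{n+1})_t(s)=f_{n+1}\bigl((1-s)t+s\bigr)$ reparametrizes only the final cylinder factor and leaves all earlier coordinates fixed. At $t=0$ this recovers the identity on $T_n$, and at $t=1$ it sends the last coordinate to the constant map with value $f_{n+1}(1)=\alpha_n\circ\dots\circ\alpha_1(a)$, which is precisely $\eta\circ\pi$ evaluated at the point. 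Thus $\eta\circ\pi$ and $id_{T_n}$ are homotopic, completing the retraction.

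The main thing to verify carefully — and the only place where something could go wrong — is that $\psi$ genuinely takes values in $T_n$ for every $t$, i.e. that the intermediate elements still satisfy all of the defining constraints of $T_n$. The constraints on the first $n-1$ cylinder coordinates $f_2,\dots,f_n$ are inherited unchanged since those coordinates are not moved. For the last coordinate one must check that $(f_{n+1})_t(1)=f_{n+1}(1)=\alpha_n\circ\dots\circ\alpha_1(a)$ holds for all $t$; this is immediate from $(f_{n+1})_t(1)=f_{n+1}((1-1)t+1)=f_{n+1}(1)$. Hence the endpoint condition is preserved throughout the homotopy, and $\psi$ is well defined into $\mathbb{I}T_n$. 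I would close by remarking that $\psi$ is a genuine C*-morphism because reparametrization in $s$ is continuous and linear and respects the pointwise algebra operations and supremum norm, so the deformation retraction is established.
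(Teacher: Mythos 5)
Your proof is correct and is essentially the paper's argument: the paper identifies $T_n$ with $Cyl(\beta_n)$ for the morphism $\beta_n:T_{n-1}\rightarrow A_{n+1}$, $a\oplus f_2\oplus\dots\oplus f_n\mapsto\alpha_n\circ\dots\circ\alpha_1(a)$, and then cites Proposition~\ref{4}, whereas you simply inline the resulting retraction, inclusion, and reparametrization homotopy explicitly. The maps you write down are exactly those of Proposition~\ref{4} transported through that identification, so nothing substantive differs.
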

\begin{proof}
The proof is done by induction on $n$. For $n=1$, let $\beta
:T_1\rightarrow A_3$ be defined by $\beta_2(a\oplus
f_2)=\alpha_2\circ\alpha_1(a)$. Then $\beta_2$ is a C*-morphism
and from proposition \ref{4},  $T_1$ is a deformation retract of
$Cyl(\beta_2)$. But $Cyl(\beta_2)=T_2$, since
\begin{align*}
Cyl(\beta_2)&=\{(a\oplus f_2)\oplus f_3\in
T_1\oplus\mathbb{I}A_3|\beta_2(a\oplus f_2)=f_3(1)\}\\
&=\{a\oplus f_2\oplus f_3\in
A_1\oplus\mathbb{I}A_2\oplus\mathbb{I}A_3|f_2(1)=\alpha_1(a) ,
f_3(1)=\alpha_2\circ\alpha_1(a)\}\\
&=T_2.
\end{align*}
Now by induction if we define $\beta_n:T_{n-1}\rightarrow A_{n+1}$
by $\beta_n(a\oplus f_2\oplus\dots\oplus
f_n)=\alpha_n\circ\alpha_{n-1}\circ\dots\circ\alpha_1(a)$, we see
that $T_{n-1}$ is a deformation retract of $T_n$.\end{proof}
\begin{corollary}
For the sequence (1), $A_1$ is a deformation retract of $T_n$.
\end{corollary}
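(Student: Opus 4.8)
The plan is to argue by induction on $n$, using Proposition \ref{4} for the base case, Proposition \ref{7} for the inductive step, and the transitivity of the noncommutative deformation-retract relation. Recall from the proof of Proposition \ref{4} that $X$ being a deformation retract of $Y$ means there are C*-morphisms $\eta:X\rightarrow Y$ and $r:Y\rightarrow X$ with $r\circ\eta=\mathrm{id}_X$ and $\eta\circ r$ homotopic to $\mathrm{id}_Y$. For the base case $n=1$, note that $T_1=Cyl(\alpha_1)$, so Proposition \ref{4} applied to $\alpha_1$ gives directly that $A_1$ is a deformation retract of $T_1$.

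For the inductive step I would first isolate the transitivity of this relation. Assume $A_1$ is a deformation retract of $T_{n-1}$ via $\eta:A_1\rightarrow T_{n-1}$ and $r:T_{n-1}\rightarrow A_1$, and let $\eta_n:T_{n-1}\rightarrow T_n$ and $r_n:T_n\rightarrow T_{n-1}$ be the morphisms exhibiting $T_{n-1}$ as a deformation retract of $T_n$ furnished by Proposition \ref{7}. Setting $\eta':=\eta_n\circ\eta$ and $r':=r\circ r_n$, the retraction identity is immediate:
$$r'\circ\eta'=r\circ r_n\circ\eta_n\circ\eta=r\circ\mathrm{id}_{T_{n-1}}\circ\eta=r\circ\eta=\mathrm{id}_{A_1}.$$
For the homotopy, I would use that pre- and post-composition with a fixed morphism carries homotopic morphisms to homotopic morphisms: from $\eta\circ r\simeq\mathrm{id}_{T_{n-1}}$ we obtain $\eta'\circ r'=\eta_n\circ(\eta\circ r)\circ r_n\simeq\eta_n\circ r_n\simeq\mathrm{id}_{T_n}$, and transitivity of the homotopy relation yields $\eta'\circ r'\simeq\mathrm{id}_{T_n}$. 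Hence $A_1$ is a deformation retract of $T_n$, which closes the induction.

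The only delicate point is the behavior of the noncommutative homotopy relation under composition together with its transitivity, since these are exactly what make the deformation-retract relation transitive; this is the step I would verify most carefully. Once it is in place the result drops out, and indeed the statement can be read as the iterated composite of the single-step retractions supplied by Propositions \ref{4} and \ref{7}.
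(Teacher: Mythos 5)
Your proof is correct and follows essentially the same route as the paper: the paper also chains the single-step retractions of Proposition \ref{7} down to $T_1=Cyl(\alpha_1)$ and then applies Proposition \ref{4}, relying implicitly on the transitivity of the deformation-retract relation that you spell out explicitly. Your explicit verification of that transitivity (composing the retractions and the homotopies) is a detail the paper omits but is exactly the right thing to check.
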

\begin{proof}
From the previous proposition, $T_n$ deformation retract to
$T_1=Cyl(\alpha_1)$ and since by \ref{4},  $Cyl(\alpha_1)$
deformation retracts to $A_1$, so $A_1$ is a deformation retract
of  $T_n$.
\end{proof}
\begin{corollary}
For the sequence (1), $K_0(A_1)=K_0(T_1)=\dots =K_0(T_n)$.
\end{corollary}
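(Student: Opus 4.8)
The plan is to deduce the chain of isomorphisms purely from the homotopy-theoretic content already assembled, using the fundamental fact that $K_0$ is a homotopy-invariant functor on the category of C*-algebras: if two C*-algebras are homotopy equivalent, then their $K_0$ groups are isomorphic. Since a deformation retract in the sense used in Propositions \ref{4} and \ref{7} (a retraction together with a homotopy, implemented by a morphism into the cylinder $\mathbb{I}(-)$ connecting the composite $\eta\circ\pi$ to the identity) is in particular a homotopy equivalence between the larger algebra and the retract, each deformation retract produced above will immediately yield an isomorphism on $K_0$.

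Concretely, I would first record the single-step isomorphisms. For each $n\geq 2$, Proposition \ref{7} exhibits $T_{n-1}$ as a deformation retract of $T_n$; hence the inclusion $T_{n-1}\hookrightarrow T_n$ and the retraction $T_n\rightarrow T_{n-1}$ are mutually homotopy-inverse, and homotopy invariance of $K_0$ gives $K_0(T_{n-1})\cong K_0(T_n)$. Separately, since $T_1=Cyl(\alpha_1)$, Proposition \ref{4} applied to $\alpha_1:A_1\rightarrow A_2$ shows that $A_1$ is a deformation retract of $T_1$, whence $K_0(A_1)\cong K_0(T_1)$.

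Finally I would splice these isomorphisms together. Iterating the single-step isomorphisms above (equivalently, composing along the deformation retraction of the previous corollary) yields
$$K_0(A_1)\cong K_0(T_1)\cong K_0(T_2)\cong\dots\cong K_0(T_n),$$
which is the asserted equality.

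The only genuine point to justify is the homotopy invariance of $K_0$ together with the observation that the C*-algebraic deformation retracts constructed in Propositions \ref{4} and \ref{7} are indeed homotopy equivalences in the sense required by that invariance statement; both are standard, so I expect no substantive obstacle. The minor bookkeeping — verifying that the explicit homotopy $\psi$ of Proposition \ref{4} meets the definition of a homotopy of C*-morphisms — is routine and need not be carried out in detail.
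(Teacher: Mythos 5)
Your argument is correct and is exactly the one the paper intends: the corollary is stated without proof, as an immediate consequence of Propositions \ref{4} and \ref{7} (and the preceding corollary) together with the homotopy invariance of $K_0$. Your write-up simply makes explicit the chain of single-step isomorphisms that the paper leaves implicit.
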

\begin{proposition}
For the sequence (1), if
$\alpha_m\circ\alpha_{m-1}\circ\dots\circ\alpha_{m-k}=0$ for some
$k<m\leq n$, then $$T_n\simeq
T_{m-1}\oplus(\bigoplus\limits_{i=m+1}^{n+1}CA_i)$$
\end{proposition}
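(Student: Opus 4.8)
\section*{Proof proposal}

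The plan is to show that the stated isomorphism is nothing more than a rearrangement of coordinates, once we observe that the hypothesis forces all the ``long'' compositions occurring in the defining constraints of $T_n$ to vanish from index $m$ onward. Throughout, write $\Phi_j := \alpha_j \circ \alpha_{j-1} \circ \cdots \circ \alpha_1$ for the composite $A_1 \to A_{j+1}$, so that by definition $T_n = \{a \oplus f_2 \oplus \cdots \oplus f_{n+1} \mid f_{j+1}(1) = \Phi_j(a),\ j = 1, \ldots, n\}$.

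First I would record the consequence of the hypothesis. Since $k < m$, the composite $\alpha_m \circ \cdots \circ \alpha_{m-k}$ begins no earlier than $\alpha_1$; hence $\Phi_m$ factors as $\Phi_m = (\alpha_m \circ \cdots \circ \alpha_{m-k}) \circ (\alpha_{m-k-1} \circ \cdots \circ \alpha_1)$ when $m - k > 1$, and equals $\alpha_m \circ \cdots \circ \alpha_1$ outright when $m - k = 1$. In either case the assumption gives $\Phi_m = 0$, and therefore, for every $j \geq m$, $\Phi_j = (\alpha_j \circ \cdots \circ \alpha_{m+1}) \circ \Phi_m = 0$. Thus in $T_n$ the constraints indexed by $j = m, \ldots, n$ collapse to $f_{j+1}(1) = 0$, and they no longer couple the coordinates $f_{m+1}, \ldots, f_{n+1}$ either to $a$ or to one another.

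Next I would split the coordinates accordingly. The constraints indexed by $j = 1, \ldots, m-1$ involve only $a, f_2, \ldots, f_m$ and are exactly the relations defining $T_{m-1}$ for the truncated sequence $A_1 \to \cdots \to A_m$. Each remaining coordinate $f_i$ (for $i = m+1, \ldots, n+1$) is now subject only to $f_i(1) = 0$, i.e. $f_i \in C_0([0,1) \to A_i)$, which under the orientation-reversing identification $C_0([0,1) \to A_i) \simeq CA_i$ recorded earlier is precisely $CA_i$. I would then define the rearrangement map carrying $a \oplus f_2 \oplus \cdots \oplus f_{n+1}$ to $(a \oplus f_2 \oplus \cdots \oplus f_m) \oplus f_{m+1} \oplus \cdots \oplus f_{n+1}$ into $T_{m-1} \oplus \bigoplus_{i=m+1}^{n+1} CA_i$, and verify it is a $*$-isomorphism: it is a $*$-homomorphism preserving the supremum norm, it is injective because no coordinate is discarded, and it is surjective because the decoupling established above lets the $T_{m-1}$-block and the cone blocks be chosen independently, so that any such tuple reassembles to a genuine element of $T_n$.

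The only point demanding genuine care, as opposed to routine checking, is the bookkeeping: confirming that the vanishing of the middle composite $\alpha_m \circ \cdots \circ \alpha_{m-k}$ really propagates to $\Phi_j = 0$ for all $j \geq m$ (not merely at $j = m$), and that the surviving cone factors occupy exactly the index range $i = m+1, \ldots, n+1$ so as to match $\bigoplus_{i=m+1}^{n+1} CA_i$. Everything else reduces to the single observation that imposing $f_i(1) = 0$ on an otherwise free $\mathbb{I}A_i$-coordinate yields a cone, via the identification $C_0([0,1) \to A_i) \simeq CA_i$.
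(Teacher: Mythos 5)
Your proposal is correct and follows essentially the same route as the paper's own proof: the hypothesis forces all composites $\alpha_j\circ\cdots\circ\alpha_1$ to vanish for $j\geq m$, so the constraints $f_{j+1}(1)=0$ decouple the coordinates $f_{m+1},\dots,f_{n+1}$ into cone factors $C_0([0,1)\to A_i)\simeq CA_i$ while the first block is exactly $T_{m-1}$. The only difference is that you spell out the propagation of the vanishing and the verification that the rearrangement is a $*$-isomorphism, which the paper leaves implicit.
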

\begin{proof}
\begin{align*}
T_n&=\{a\oplus f_2\oplus\dots\oplus f_{n+1}\in
A_1\oplus\mathbb{I}A_2\oplus\dots\oplus\mathbb{I}A_{n+1}
|\\
&\quad
f_{k+1}(1)=\alpha_k\circ\alpha_{k-1}\circ\dots\circ\alpha_1(a) ,
k=1,2,\dots,n\}\\
&=\{a\oplus f_2\oplus\dots\oplus f_{n+1}\in
A_1\oplus\mathbb{I}A_2\oplus\dots\oplus\mathbb{I}A_{n+1}
|\\
&\quad
f_{k+1}(1)=\alpha_k\circ\alpha_{k-1}\circ\dots\circ\alpha_1(a),
1\leq k< m; f_{k+1}(1)=0,m\leq k\leq m\}\\
&\simeq T_{m-1}\oplus CA_{m+1}\oplus\dots\oplus CA_{n+1}.
\end{align*}
\end{proof}
\begin{corollary}
For the exact sequence
$A_1\xrightarrow[{}]{\alpha_1}A_2\xrightarrow[{}]{\alpha_2}\dotsb\xrightarrow{\alpha_{n-1}}A_n\xrightarrow[{}]{\alpha_n}A_{n+1}$
of length $n$, $T_n\simeq T_{n-1}\oplus CA_{n+1}$ and in
particular $$T_n\simeq
Cyl(\alpha_1)\oplus(\bigoplus\limits_{i=3}^{n+1}CA_i).$$
\end{corollary}
\begin{proposition}
If
$A_1\xrightarrow[{\qquad}]{\alpha_1}A_2\xrightarrow[{\qquad}]{\alpha_2}\dotsb\xrightarrow{\alpha_{n-1}}A_n\xrightarrow[{\qquad}]{\alpha_n}A_{n+1}$
is a sequence of simplicial morphism between NCCW complexes of
dimensions $m_1,m_2,\dots,m_{n+1}$ then $T_n$ is an NCCW complex
of dimension $max\{m_1,1+m_2,1+m_3,\dots,1+m_{n+1}\}$.
\end{proposition}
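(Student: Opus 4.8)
\emph{Strategy.} The plan is to prove this by induction on $n$, building $T_n$ as a single mapping cylinder over $T_{n-1}$ and then invoking Proposition \ref{6}. The base case $n=1$ is immediate: $T_1 = Cyl(\alpha_1)$ for the simplicial morphism $\alpha_1 : A_1 \to A_2$, so Proposition \ref{6} gives that $T_1$ is an NCCW complex of dimension $\max\{m_1, 1+m_2\}$, which is exactly the asserted value. For the inductive step I would assume $T_{n-1}$ is an NCCW complex of dimension $d_{n-1} = \max\{m_1, 1+m_2, \dots, 1+m_n\}$ and use the identification $T_n = Cyl(\beta_n)$ already established in the proof of Proposition \ref{7}, where $\beta_n : T_{n-1} \to A_{n+1}$ is given by $\beta_n(a \oplus f_2 \oplus \dots \oplus f_n) = \alpha_n \circ \alpha_{n-1} \circ \dots \circ \alpha_1(a)$.

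\emph{Main reduction.} Since $\mathbb{I}A_{n+1}$ is an NCCW complex of dimension $1+m_{n+1}$ and $ev(1):\mathbb{I}A_{n+1}\to A_{n+1}$ is simplicial, Proposition \ref{6} applied to $\beta_n$ (in place of $\alpha$) will yield that $T_n = Cyl(\beta_n)$ is an NCCW complex of dimension $\max\{d_{n-1}, 1+m_{n+1}\} = \max\{m_1, 1+m_2, \dots, 1+m_{n+1}\}$, as required. Thus the entire argument reduces to a single point: showing that $\beta_n$ is itself a simplicial morphism. I would handle this by factoring $\beta_n = (\alpha_n \circ \dots \circ \alpha_1) \circ p$, where $p : T_{n-1} \to A_1$, $a \oplus f_2 \oplus \dots \oplus f_n \mapsto a$, is the projection onto the first coordinate, and then proving two subclaims. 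First, a composite of simplicial morphisms is simplicial: condition (i) is preserved because $\alpha(I_k)\subset J_k$ and $\beta(J_k)\subset K_k$ force $\beta\alpha(I_k)\subset K_k$, and condition (ii) is preserved because the induced map of a composite on the $k$-th subquotient is $(j_k^*\otimes\psi_k)(i_k^*\otimes\varphi_k) = (i_k\circ j_k)^*\otimes(\psi_k\circ\varphi_k)$, again of the form $i_k^*\otimes\varphi_k$ since $i_k\circ j_k$ is a homeomorphism of $\mathbb{I}^k$ and $\psi_k\circ\varphi_k$ is a morphism. In particular $\alpha_n\circ\dots\circ\alpha_1 : A_1 \to A_{n+1}$ is simplicial.

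\emph{The hard part.} The genuinely technical step is the second subclaim, that the coordinate projection $p$ is simplicial. I would realize $p$ as the iterated composite of cylinder projections $\pi : Cyl(\gamma) \to D$, $d \oplus f \mapsto d$, running $T_{n-1} \to T_{n-2} \to \dots \to T_1 = Cyl(\alpha_1) \to A_1$, and reduce to showing each such $\pi$ is simplicial. For a generic stage one has $\ker\pi = \{0\oplus f : f(1)=0\} = C_0([0,1)\to E)\simeq CE$, so $\pi$ is the quotient map onto $D \simeq Cyl(\gamma)/CE$; if $CE$ is a canonical subcomplex of $Cyl(\gamma)$, then $\pi$ is simplicial by the quotient-map criterion [P, 11.14] already used for the evaluation map. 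The main obstacle is precisely verifying that this kernel respects the canonical ideal filtration and that $\pi$ induces maps of tensor form $i_k^*\otimes\varphi_k$ on every subquotient; this forces one to track explicitly how the canonical ideals of the iterated cylinder $T_{n-1}$ are assembled from those of $A_1,\dots,A_n$ through the successive pullback constructions. One clean way to keep this manageable is to strengthen the induction hypothesis so that it also asserts simpliciality of the projection $T_{n-1}\to T_{n-2}$; with that in hand, everything else follows from Propositions \ref{6} and \ref{7} together with the stability of the simplicial class under composition established above.
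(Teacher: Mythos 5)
Your proposal follows essentially the same route as the paper's proof: induction on $n$, the identification $T_n=Cyl(\beta_n)$ from Proposition \ref{7}, and an application of Proposition \ref{6} to $\beta_n$ to get the dimension $\max\{m_1,1+m_2,\dots,1+m_{n+1}\}$. The only divergence is that you isolate and attempt to justify the one step the paper merely asserts --- that $\beta_n$ is simplicial --- by factoring it through the coordinate projection and proving closure of simplicial morphisms under composition; this is a more careful rendering of the same argument rather than a different one.
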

\begin{proof} Since $\alpha_1:A_1\rightarrow A_2$
is a simplicial morphism , by \ref{6}, $T_1=Cyl(\alpha_1)$ is an
NCCW complex of dimension $max\{m_1,1+m_2\}$. Let $\beta_2:
T_1\rightarrow A_3$ be the C*-morphisms of proposition \ref{7}.
Since $\alpha_1$ and $\alpha_2$ are simplicial morphism then so is
$\beta_2$, and $T_2=Cyl(\beta_2)$ is an NCCW complex of dimension
$max\{max\{m_1,1+m_2\},1+m_3\}=max\{m_1,1+m_2,1+m_3\}$.
Inductively, the morphism
\begin{align*}
\beta_n:&T_{n-1}\longrightarrow A_{n+1}\\
&a\oplus f_2\oplus\dots\oplus
f_n\mapsto\alpha_n\circ\alpha_{n-1}\circ\dots\circ\alpha_1(a)
\end{align*}
is simplicial and so $T_n=Cyl(\beta_n)$ is an NCCW complex of
dimension $max\{m_1,1+m_2,1+m_3,\dots,1+m_{n+1}\}$.
\end{proof}
\begin{definition}
Let
$A_1\xrightarrow[{\quad}]{\alpha_1}A_2\xrightarrow[{\quad}]{\alpha_2}\dotsb\xrightarrow{\alpha_{n-1}}A_n\xrightarrow[{\quad}]{\alpha_n}A_{n+1}$
be a sequence of C*-morphisms. The {\it NC mapping conical
telescope} is defined as
\begin{align*}
T_nC:=\{&a\oplus f_2\oplus\dots\oplus f_{n+1}\in A_1\oplus
CA_2\oplus\dots\oplus CA_{n+1}
|\\
&f_{k+1}(1)=\alpha_k\circ\alpha_{k-1}\circ\dots\circ\alpha_1(a) ,
k=1,2,\dots,n\}
\end{align*}
\end{definition}
As in the case $T_n$, for $n=1$, $T_1C=Cone(\alpha_1)$.
\begin{proposition}
For the sequence (1), if
$\alpha_m\circ\alpha_{m-1}\circ\dots\circ\alpha_{m-k}=0$ for some
$k<m\leq n$, then
$$T_nC=T_{m-1}C\oplus(\bigoplus\limits_{i=m+1}^{n+1}SA_i)$$
\end{proposition}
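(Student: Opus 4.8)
The plan is to mimic the direct computation used for the cylindrical telescope $T_n$ in the preceding proposition, the only difference being that the coordinate functions now start life in the cone $CA_{k+1}$ rather than in $\mathbb{I}A_{k+1}$; consequently the extra vanishing condition at $1$ will produce suspensions instead of cones.

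First I would unwind the defining constraints of $T_nC$ and exploit the hypothesis. Since $k<m$ we have $m-k\geq 1$, so the composite $\alpha_m\circ\alpha_{m-1}\circ\dots\circ\alpha_{m-k}$ occurs as a genuine contiguous segment $A_{m-k}\to A_{m+1}$ of each full composite $\alpha_j\circ\dots\circ\alpha_1$ arising in the telescope for $j\geq m$; hence $\alpha_j\circ\dots\circ\alpha_1=0$ whenever $j\geq m$. This splits the $n$ defining equations into two groups: for $1\leq k<m$ the genuine constraint $f_{k+1}(1)=\alpha_k\circ\dots\circ\alpha_1(a)$ survives, while for $m\leq k\leq n$ it collapses to the homogeneous condition $f_{k+1}(1)=0$.

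The key step is the observation that decouples the two groups. For each $k$ with $m\leq k\leq n$ the coordinate $f_{k+1}$ already lies in $CA_{k+1}$, so $f_{k+1}(0)=0$; adjoining the new condition $f_{k+1}(1)=0$ forces $f_{k+1}(0)=f_{k+1}(1)=0$, that is $f_{k+1}\in SA_{k+1}$ by the definition of the suspension. Crucially these conditions no longer involve $a$, so $f_{m+1},\dots,f_{n+1}$ range freely and independently over $SA_{m+1},\dots,SA_{n+1}$. The remaining data $a\oplus f_2\oplus\dots\oplus f_m$, subject to the surviving constraints $f_{k+1}(1)=\alpha_k\circ\dots\circ\alpha_1(a)$ for $1\leq k<m$, is by definition exactly $T_{m-1}C$ built on the truncated sequence $A_1\xrightarrow{\alpha_1}\dots\xrightarrow{\alpha_{m-1}}A_m$. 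Assembling these identifications yields the asserted decomposition $T_nC\simeq T_{m-1}C\oplus\bigoplus_{i=m+1}^{n+1}SA_i$.

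The only point requiring care — the analogue of the main obstacle in the cylindrical case — is checking that this is a direct sum decomposition of C*-algebras and not merely a set-theoretic bijection: one verifies that the projection onto $(a,f_2,\dots,f_m)$ and the projections onto $f_i$ for $i>m$ are $*$-homomorphisms whose combined kernel is trivial and whose images are precisely $T_{m-1}C$ and the $SA_i$. This is immediate once the constraints have been decoupled as above, since the pointwise operations and supremum norm on $A_1\oplus CA_2\oplus\dots\oplus CA_{n+1}$ restrict to the product structure on the two factors.
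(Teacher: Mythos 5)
Your proof is correct and follows essentially the same route as the paper: the hypothesis kills the composites $\alpha_j\circ\dots\circ\alpha_1$ for $j\geq m$, the constraints for $k\geq m$ reduce to $f_{k+1}(1)=0$ which together with $f_{k+1}(0)=0$ (already built into $CA_{k+1}$) places $f_{k+1}$ in $SA_{k+1}$, and the decoupled constraints yield the direct sum with $T_{m-1}C$. The paper presents this as a terse chain of set equalities; your version merely makes the decoupling and the C*-algebraic nature of the splitting explicit.
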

\begin{proof}
\begin{align*}
T_nC&=\{a\oplus f_2\oplus\dots\oplus f_{n+1}\in A_1\oplus
CA_2\oplus\dots\oplus CA_{n+1}
|\\
&\quad
f_{k+1}(1)=\alpha_k\circ\alpha_{k-1}\circ\dots\circ\alpha_1(a) ,
k=1,2,\dots,n\}\\
&=\{a\oplus f_2\oplus\dots\oplus f_{n+1}\in A_1\oplus
CA_2\oplus\dots\oplus CA_{n+1}
|\\
&\quad
f_{k+1}(1)=\alpha_k\circ\alpha_{k-1}\circ\dots\circ\alpha_1(a),
1\leq k< m; f_{k+1}(1)=0,m\leq k\leq m\}\\
&= T_{m-1}C\oplus SA_{m+1}\oplus\dots\oplus SA_{n+1}.
\end{align*}
\end{proof}
\begin{corollary}
If the sequence (1) is exact, then $T_nC=T_{n-1}\oplus SA_{n+1}$,
and in particular $$T_nC=
Cone(\alpha_1)\oplus(\bigoplus\limits_{i=3}^{n+1}SA_i).$$
\end{corollary}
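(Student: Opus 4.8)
The plan is to obtain this corollary as an immediate specialization of the preceding proposition, with exactness used only to supply the vanishing hypothesis. First I would record what exactness buys us: if the sequence (1) is exact, then at each interior node $A_k$ (for $2\le k\le n$) we have $\mathrm{im}\,(\alpha_{k-1})\subseteq\ker(\alpha_k)$, so every consecutive composition vanishes, $\alpha_k\circ\alpha_{k-1}=0$. This single observation is the only input needed to feed the previous proposition, whose conclusion is purely a direct-sum decomposition of $T_nC$.

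For the first equality I would apply the preceding proposition with $m=n$ and $k=1$ (legitimate for $n\ge2$, the only nondegenerate case). Its hypothesis $\alpha_m\circ\alpha_{m-1}\circ\dots\circ\alpha_{m-k}=\alpha_n\circ\alpha_{n-1}=0$ is exactly the vanishing just noted, so the proposition gives
$$T_nC=T_{m-1}C\oplus\Big(\bigoplus_{i=m+1}^{n+1}SA_i\Big)=T_{n-1}C\oplus SA_{n+1},$$
since the range $i=n+1$ to $n+1$ collapses to the single term $SA_{n+1}$. Here the telescope on the right is read as the conical telescope $T_{n-1}C$.

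For the ``in particular'' clause I would instead apply the proposition with $m=2$ and $k=1$. Exactness at $A_2$ gives $\alpha_2\circ\alpha_1=0$, so the proposition yields
$$T_nC=T_1C\oplus\Big(\bigoplus_{i=3}^{n+1}SA_i\Big).$$
Since $T_1C=Cone(\alpha_1)$ by the remark following the definition of the conical telescope, this is precisely the asserted decomposition.

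There is no genuine obstacle here; the proof is a matter of choosing the right pair $(m,k)$ and verifying the bookkeeping. The two points needing care are confirming that exactness really forces $\alpha_k\circ\alpha_{k-1}=0$ (so the hypothesis of the proposition is met) and checking the index ranges so that the suspension telescope collapses to a single term $SA_{n+1}$ in the first statement and runs from $i=3$ in the second. Both reductions are immediate once the correct indices are substituted.
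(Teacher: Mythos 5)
Your proof is correct and is exactly the intended argument: the paper leaves this corollary as an immediate consequence of the preceding proposition, obtained by choosing $(m,k)=(n,1)$ and $(m,k)=(2,1)$ and using that exactness forces $\alpha_k\circ\alpha_{k-1}=0$. You also rightly read the first summand as the conical telescope $T_{n-1}C$ (the paper's $T_{n-1}$ is a typo), so nothing further is needed.
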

\begin{proposition}
For each sequence of length $n>1$,there is an exact sequence
$0\longrightarrow SA_{n+1}\longrightarrow T_nC\longrightarrow
T_{n-1}\longrightarrow 0$. \end{proposition}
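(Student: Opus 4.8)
The plan is to realize the stated sequence as the extension coming from the coordinate projection that deletes the last cone factor of the conical telescope. (Here the right-hand term $T_{n-1}$ should be read as the conical telescope $T_{n-1}C$ of the truncated sequence $A_1\xrightarrow{\alpha_1}\dotsb\xrightarrow{\alpha_{n-1}}A_n$; this is the only C*-algebra onto which the projection can surject, since every cone coordinate $f_i$ occurring in $T_nC$ satisfies $f_i(0)=0$, so the image cannot fill out the full cylindrical $T_{n-1}$.) Concretely, I would define $\rho:T_nC\rightarrow T_{n-1}C$ by $\rho(a\oplus f_2\oplus\dots\oplus f_{n+1})=a\oplus f_2\oplus\dots\oplus f_n$ and $\iota:SA_{n+1}\rightarrow T_nC$ by $\iota(g)=0\oplus 0\oplus\dots\oplus 0\oplus g$, placing $g$ in the last slot, and then check exactness of $0\rightarrow SA_{n+1}\xrightarrow{\iota}T_nC\xrightarrow{\rho}T_{n-1}C\rightarrow 0$ at each spot.

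First I would verify that $\rho$ and $\iota$ are well-defined C*-morphisms. For $\rho$, the defining constraints of $T_nC$ for $k=1,\dots,n-1$ are precisely those defining $T_{n-1}C$, so a point of $T_nC$ is sent into $T_{n-1}C$; and $\rho$ is a restriction of a coordinate projection, hence a $*$-homomorphism. For $\iota$, given $g\in SA_{n+1}$ one has $g(0)=g(1)=0$, so $g\in CA_{n+1}$ with $g(1)=0$, and setting the other coordinates to $0$ makes every constraint $f_{k+1}(1)=\alpha_k\circ\dots\circ\alpha_1(0)=0$ hold; thus $\iota(g)\in T_nC$, and injectivity of $\iota$ is immediate.

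Next comes the kernel computation, which is the heart of the argument. An element $a\oplus f_2\oplus\dots\oplus f_{n+1}$ lies in $\ker\rho$ iff $a=0$ and $f_2=\dots=f_n=0$. Feeding $a=0$ into the surviving constraint gives $f_{n+1}(1)=\alpha_n\circ\dots\circ\alpha_1(0)=0$, while membership in $CA_{n+1}$ forces $f_{n+1}(0)=0$; hence $f_{n+1}\in SA_{n+1}$. Conversely every such $f_{n+1}$ arises this way, so $\ker\rho=\operatorname{im}\iota\cong SA_{n+1}$, giving exactness at $T_nC$ and confirming along the way that $SA_{n+1}$ sits inside $T_nC$ as a closed two-sided ideal, being the kernel of a $*$-homomorphism.

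The remaining point, surjectivity of $\rho$, is where a small construction is required and is the only genuine obstacle. Given $a\oplus f_2\oplus\dots\oplus f_n\in T_{n-1}C$, I must produce $f_{n+1}\in CA_{n+1}$ with $f_{n+1}(1)=\alpha_n\circ\dots\circ\alpha_1(a)$. The straight-line path $f_{n+1}(t)=t\,\alpha_n\circ\dots\circ\alpha_1(a)$ works: it is continuous, satisfies $f_{n+1}(0)=0$ so that $f_{n+1}\in CA_{n+1}$, and meets the endpoint condition; moreover $a\oplus f_2\oplus\dots\oplus f_{n+1}\in T_nC$, since the added constraint for $k=n$ is exactly this endpoint condition while the others are inherited from $T_{n-1}C$. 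Hence $\rho$ is onto and the sequence is exact. One could alternatively package this by exhibiting $T_nC$ as the pullback of $T_{n-1}C$ and $CA_{n+1}$ over $A_{n+1}$ along the composite $\alpha_n\circ\dots\circ\alpha_1$ and $ev(1)$, and read off the extension from the kernel exact sequence $0\rightarrow SA_{n+1}\rightarrow CA_{n+1}\xrightarrow{ev(1)} A_{n+1}\rightarrow 0$ via the pullback universality of Section 2, but the explicit maps above are the most direct route.
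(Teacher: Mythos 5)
Your proof takes exactly the same route as the paper's: the inclusion of $SA_{n+1}$ into the last coordinate and the projection deleting it, with the quotient correctly read as $T_{n-1}C$ (the paper's own proof uses $T_{n-1}C$ as the codomain of $\pi$, so the $T_{n-1}$ in the statement is indeed a typo you were right to flag). You simply supply the details the paper leaves implicit, most usefully the surjectivity of the projection via the path $t\mapsto t\,\alpha_n\circ\dots\circ\alpha_1(a)$.
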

\begin{proof}Let $i: SA_{n+1}\rightarrow T_nC$ be
the inclusion morphism $f\mapsto 0\oplus 0\oplus\dots\oplus f$ and
$\pi : T_nC\rightarrow T_{n-1}C$ be the projection $a\oplus
f_2\oplus\dots\oplus f_{n+1}\mapsto a\oplus f_2\oplus\dots\oplus
f_n$, then $ker\pi=i(SA_{n+1})$.
\end{proof}
\begin{proposition}
For each sequence of length $n$, there exists an exact sequence
$$0\longrightarrow T_nC\longrightarrow T_n\longrightarrow\bigoplus\limits_{k=2}^{n+1}A_k\longrightarrow 0.$$
\end{proposition}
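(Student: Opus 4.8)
The plan is to write down the two maps explicitly and then check exactness at each of the three spots. The left-hand map should be the natural inclusion $\iota: T_nC \hookrightarrow T_n$: it is well defined because $CA_k \subset \mathbb{I}A_k$ for every $k$ and the defining telescope relations $f_{k+1}(1)=\alpha_k\circ\dots\circ\alpha_1(a)$ are literally the same in $T_nC$ and in $T_n$, so each element of $T_nC$ is already an element of $T_n$. The right-hand map should be evaluation at the endpoint $0$ in each cylinder coordinate, namely
$$\varphi(a \oplus f_2 \oplus \dots \oplus f_{n+1}) := f_2(0) \oplus f_3(0) \oplus \dots \oplus f_{n+1}(0) \in \bigoplus_{k=2}^{n+1} A_k.$$
Since point-evaluation on $C(\mathbb{I}\to A_k)$ is a $*$-homomorphism and a finite direct sum of $*$-homomorphisms is again one, $\varphi$ is a C*-morphism, and $\iota$ is a C*-morphism as the inclusion of a C*-subalgebra.

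First I would dispose of exactness at $T_nC$, which is just injectivity of $\iota$ and is immediate. Next, exactness at $T_n$ amounts to the identity $\ker\varphi = \iota(T_nC)$. An element of $T_n$ lies in $\ker\varphi$ precisely when $f_k(0)=0$ for every $k=2,\dots,n+1$; but $f_k\in\mathbb{I}A_k$ together with $f_k(0)=0$ says exactly $f_k\in CA_k$, and since the telescope relations are unchanged this is precisely the condition defining $T_nC$. Hence $\ker\varphi=T_nC$, as needed.

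The last and only mildly nontrivial point is surjectivity of $\varphi$, which gives exactness at $\bigoplus_{k=2}^{n+1}A_k$. Given an arbitrary $b_2\oplus\dots\oplus b_{n+1}$, I would lift it by setting the $A_1$-coordinate to $0$ and choosing the affine paths $f_k(t):=(1-t)b_k$. Then $f_k(0)=b_k$ and $f_k(1)=0$, while the telescope conditions reduce to $f_{k+1}(1)=\alpha_k\circ\dots\circ\alpha_1(0)=0$, which hold because each $\alpha_j$ is a C*-morphism and therefore sends $0$ to $0$. Thus $0\oplus f_2\oplus\dots\oplus f_{n+1}\in T_n$ and $\varphi$ carries it to the prescribed element, proving surjectivity.

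I do not anticipate a real obstacle: the construction is essentially forced once one recognizes that $T_nC$ is cut out inside $T_n$ exactly by the vanishing-at-$0$ conditions, so the quotient must record the missing boundary data $f_k(0)\in A_k$. The only place demanding a small verification is surjectivity, where one must produce lifts compatible with the telescope relations, and the choice $a=0$ trivializes those relations so the affine paths suffice. If a more quantitative statement were wanted, one could observe that $\|f_k\|_\infty=\|b_k\|$, so $\varphi$ is even a metric quotient map of C*-algebras, which reconfirms surjectivity.
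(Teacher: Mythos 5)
Your proof is correct and follows exactly the same route as the paper: the inclusion $T_nC\hookrightarrow T_n$ together with coordinatewise evaluation at $0$, whose kernel is cut out precisely by the conditions $f_k(0)=0$ defining $T_nC$. In fact you supply the one verification the paper leaves implicit, namely surjectivity via the lift $a=0$, $f_k(t)=(1-t)b_k$, which is a worthwhile addition.
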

\begin{proof}
Let $i:T_nC\rightarrow T_n$ be the obvious inclusion and
$\pi:T_n\rightarrow \bigoplus\limits_{k=2}^{n+1}A_k$ be defined by
$\pi(a\oplus f_2\oplus\dots\oplus
f_{n+1}):=f_2(0)\oplus\dots\oplus f_{n+1}(0)$, Then $ker\pi
=T_nC$.
\end{proof}
\begin{proposition}
for each sequence of length $n$, ther exists a cyclic six term
exact Sequence,
$$\begin{CD}
K_0(T_nC)@>>>K_0(T_n)@>>>\bigoplus\limits_{i=2}^{n+1}K_0(A_i)\\
@AAA&&@VVV\\
\bigoplus\limits_{i=2}^{n+1}K_1(A_i)@<<<K_1(T_n)@<<<K_1(T_nC)\end{CD}$$
\end{proposition}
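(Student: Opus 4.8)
The plan is to invoke the standard six-term exact sequence of operator K-theory, applied to the short exact sequence produced in the immediately preceding proposition. That proposition furnishes
$$0\longrightarrow T_nC\longrightarrow T_n\longrightarrow\bigoplus\limits_{k=2}^{n+1}A_k\longrightarrow 0,$$
exhibiting $T_nC$ as a closed ideal of $T_n$ with quotient $\bigoplus\limits_{k=2}^{n+1}A_k$. So the entire content of the statement is to transport this extension through the K-functors.

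First I would recall from [W] the fundamental theorem that to every short exact sequence of C*-algebras $0\to I\to A\to A/I\to 0$ there is associated a cyclic six-term exact sequence of abelian groups
$$\begin{CD}
K_0(I)@>>>K_0(A)@>>>K_0(A/I)\\
@AAA&&@VVV\\
K_1(A/I)@<<<K_1(A)@<<<K_1(I)\end{CD}$$
in which the horizontal arrows are induced functorially by the inclusion and the quotient morphism, while the two vertical arrows are the index map $\partial_1:K_1(A/I)\to K_0(I)$ and the exponential map $\partial_0:K_0(A/I)\to K_1(I)$. Substituting $I=T_nC$, $A=T_n$, and $A/I=\bigoplus\limits_{k=2}^{n+1}A_k$ places $K_j(T_nC)$ and $K_j(T_n)$ in exactly the asserted positions for $j=0,1$.

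Finally, to obtain the precise form displayed in the proposition I would apply additivity of the K-functors under finite direct sums, namely the natural isomorphism $K_j\bigl(\bigoplus\limits_{k=2}^{n+1}A_k\bigr)\cong\bigoplus\limits_{i=2}^{n+1}K_j(A_i)$ for $j=0,1$. This rewrites the quotient terms $K_j(A/I)$ as $\bigoplus\limits_{i=2}^{n+1}K_j(A_i)$ and yields precisely the diagram claimed. There is essentially no obstacle beyond correctly identifying the three algebras of the extension and citing the six-term theorem; the only step meriting a word of care is the additivity isomorphism, which is a standard property of $K_0$ and $K_1$ and preserves the exactness inherited from the six-term sequence, so the resulting diagram is exact as stated.
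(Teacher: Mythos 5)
Your proposal is correct and follows exactly the paper's own route: it applies the standard six-term exact sequence of K-theory to the extension $0\to T_nC\to T_n\to\bigoplus_{k=2}^{n+1}A_k\to 0$ from the preceding proposition and then uses additivity of $K_0$ and $K_1$ over finite direct sums to rewrite the quotient terms. No differences worth noting.
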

\begin{proof}
Since $T_nC$ is an ideal in $T_n$, and
$T_n/T_nC=\bigoplus\limits_{i=2}^{n+1}A_i$, and
$K_0(\bigoplus\limits_{i=2}^{n+1}A_i)=\bigoplus\limits_{i=2}^{n+1}K_0(A_i)$
and $
K_1(\bigoplus\limits_{i=2}^{n+1}A_i)=\bigoplus\limits_{i=2}^{n+1}K_1(A_i)$,
the exactness of the six term sequence follows from [W,9.3.2].
\end{proof}
\begin{proposition}
If
$A_1\xrightarrow[{\quad}]{\alpha_1}A_2\xrightarrow[{\quad}]{\alpha_2}\dotsb\xrightarrow{\alpha_{n-1}}A_n\xrightarrow[{\quad}]{\alpha_n}A_{n+1}$
is a sequence of simplicial morphisms between NCCW complexes of
dimensions $m_1,m_2,\dots,m_{n+1}$, then $T_nC$ is an NCCW complex
of dimension $max\{m_1,1+m_2,1+m_3,\dots,1+m_{n+1}\}$.
\end{proposition}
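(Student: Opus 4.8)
The plan is to mirror the cylindrical-telescope argument exactly, exploiting the fact that the conical telescope $T_nC$ is built from the very same pullback constructions as $T_n$, but with the cone $CA_k$ replacing $\mathbb{I}A_k$ at each stage. Recall from the earlier proposition that $Cone(\alpha_1)$ is an NCCW complex of dimension $max\{m_1,1+m_2\}$ whenever $\alpha_1\colon A_1\to A_2$ is simplicial; this is the base case, since $T_1C=Cone(\alpha_1)$. I would then proceed by induction on $n$, constructing at each step the connecting morphism $\beta_n\colon T_{n-1}C\to A_{n+1}$ by the same formula as in the cylindrical case, namely $a\oplus f_2\oplus\dots\oplus f_n\mapsto\alpha_n\circ\alpha_{n-1}\circ\dots\circ\alpha_1(a)$, and observe that $T_nC=Cone(\beta_n)$.

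The key structural step is to verify the pullback identity $T_nC=Cone(\beta_n)$. By definition $Cone(\beta_n)=\{(a\oplus f_2\oplus\dots\oplus f_n)\oplus f_{n+1}\in T_{n-1}C\oplus CA_{n+1}\mid \beta_n(a\oplus f_2\oplus\dots\oplus f_n)=f_{n+1}(1)\}$, and unwinding the constraint $f_{n+1}(1)=\alpha_n\circ\dots\circ\alpha_1(a)$ together with the defining constraints inherited from $T_{n-1}C$ recovers precisely the defining relations of $T_nC$. This is the conical analogue of the computation already carried out for $T_2=Cyl(\beta_2)$ in the proof of the cylindrical retraction, with $CA_{n+1}$ in place of $\mathbb{I}A_{n+1}$.

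Next I would check that $\beta_n$ is a simplicial morphism. Since $\alpha_1,\dots,\alpha_n$ are simplicial, each composite $\alpha_k\circ\dots\circ\alpha_1$ is simplicial, and by the inductive hypothesis $T_{n-1}C$ is an NCCW complex; one verifies that $\beta_n$ respects the canonical ideal filtration and that its induced maps on subquotients factor as $i_k^*\otimes\varphi_k$, exactly as in the cylindrical case. Granting this, I apply Proposition (ii) to the pullback diagram for $Cone(\beta_n)=Cyl$-type pullback of the NCCW complex $A_{n+1}$ via the simplicial morphisms $\beta_n$ and $ev(1)\colon CA_{n+1}\to A_{n+1}$. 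Because $CA_{n+1}$ is an NCCW complex of dimension $1+m_{n+1}$ (by the cone proposition), the resulting complex has dimension $max\{\dim T_{n-1}C,\,1+m_{n+1}\}=max\{m_1,1+m_2,\dots,1+m_n,1+m_{n+1}\}$, completing the induction.

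The main obstacle I anticipate is not the dimension bookkeeping but the careful verification that $ev(1)\colon CA_{n+1}\to A_{n+1}$ is a simplicial morphism and that the cone $CA_{n+1}$ carries an NCCW structure compatible with this evaluation, so that Proposition (ii) genuinely applies to the cone pullback rather than the cylinder pullback. For the cylinder telescope this was guaranteed by the proposition that $ev(1)\colon\mathbb{I}B\to B$ is simplicial; here I must use instead the identification $CA_{n+1}\simeq C_0((0,1])\otimes A_{n+1}$ from the cone proposition and confirm that restriction to the endpoint remains simplicial in the sense of the definition. Once this single point is settled, the rest of the argument is a verbatim transcription of the cylindrical telescope proof.
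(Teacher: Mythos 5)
Your proposal is correct and follows essentially the same route as the paper: the base case $T_1C=Cone(\alpha_1)$ via the proposition on cones of simplicial morphisms, then induction using $\beta_n\colon T_{n-1}C\to A_{n+1}$, the identification $T_nC=Cone(\beta_n)$, and the dimension count $max\{\dim T_{n-1}C,\,1+m_{n+1}\}$. The extra care you take with $ev(1)\colon CA_{n+1}\to A_{n+1}$ is already subsumed in the paper's Proposition on $Cone(\alpha)$ for simplicial $\alpha$, which the paper simply cites.
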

\begin{proof}
Since $\alpha_1:A_1\rightarrow A_2$ is a simpicial morphism, by
\ref{2} , $T_1C=Cone(\alpha_1)$ is an NCCW complex of dimension
$max\{m_1,1+m_2\}$. Let $\beta_2: T_1C\rightarrow A_3$ be defined
as $\beta(a\oplus f_2)=\alpha_2\circ\alpha_1(a)$. Since $\alpha_1$
and $\alpha_2$ are simplicial morphism then so is $\beta_2$, and
$T_2C=Cone(\beta_2)$ is an NCCW complex of dimension
$max\{max\{m_1,1+m_2\},1+m_3\}=max\{m_1,1+m_2,1+m_3\}$.
Inductively, the morphism
\begin{align*}
\beta_n:&T_{n-1}C\longrightarrow A_{n+1}\\
&a\oplus f_2\oplus\dots\oplus
f_n\mapsto\alpha_n\circ\alpha_{n-1}\circ\dots\circ\alpha_1(a)
\end{align*}
is simplicial and so $T_nC=Cone(\beta_n)$ is an NCCW complex of
dimension $max\{m_1,1+m_2,1+m_3,\dots,1+m_{n+1}\}$.\end{proof}


\begin{thebibliography}{99}
\bibitem[GVF]{}
J.M.GRACIA-BONDIA, J.C.VARILLY and H.FIGUEROA, {\it Elements of
Noncommutative Geometry}, Birkhauser, Boston, 2001.
\bibitem[H]{}
A.HATCHER, {\it Algebraic topology}, Cambridge University
Press,2002.
\bibitem[P]{}G. K. PEDERSEN, {\it Pullbach anb pushout constructions in C*-algebras theory}, J.Funct.Analysis 167(1999), 243-344.
\bibitem
[W]{} N.E.WEGGE-OLSEN, {\it K-theory and C*-Algebras : a Friendly
Approach}, Oxford University Press,1993.
\end{thebibliography}
\end{document}